\newcommand{\Q}{\mathbb{Q}}
\newcommand{\R}{\mathbb{R}}
\newcommand{\Z}{\mathbb{Z}}
\newcommand{\C}{\mathbb{C}}
\newcommand{\modb}[1]{\,\left(\mathrm{mod}\,{#1}\right)}
\newcommand{\sgn}{\operatorname{sign}}
\newcommand{\conv}{\operatorname{conv}\!}
\theoremstyle{definition}
\newtheorem{thm}{Theorem}[section]
\theoremstyle{definition}
\newtheorem{cor}[thm]{Corollary}
\theoremstyle{definition}
\theoremstyle{definition}
\newtheorem{dfn}[thm]{Definition}
\theoremstyle{definition}
\newtheorem{lem}[thm]{Lemma}
\theoremstyle{definition}
\newtheorem{ex}[thm]{Example}
\theoremstyle{definition}
\newtheorem{conj}[thm]{Conjecture}
\begin{document}

\title{Restrictions on the Singularity Content of a Fano Polygon}
\author{Daniel Cavey}

\maketitle

\begin{abstract}
We determine restrictions on the singularity content of a Fano polygon, or equivalently of certain orbifold del~Pezzo surfaces. We establish bounds on the maximum number of $\frac{1}{R}(1,1)$ singularities in the basket of residual singularities. In particular, there are no Fano polygons without T-singularities and with a basket given by (i)~$\big\{ k \times \frac{1}{R}(1,1) \big\}$ for $k \in \Z_{>0}$ and $R \geq 5$, or (ii)~$\big\{ \frac{1}{R_{1}}(1,1), \frac{1}{R_{2}}(1,1), \frac{1}{R_{3}}(1,1) \big\}$.
\end{abstract}

\section{Introduction}

The motivation for this work comes from an approach to classifying del~Pezzo surfaces via Mirror Symmetry that has been introduced in recent years by Coates--Corti--Kasprzyk et al.~\cite{MirrorSymmetryandtheClassificationofOrbifoldDelPezzoSurfaces, MirrorSymmetryandFanoManifolds}. Mirror Symmetry establishes a conjectural relation between certain Laurent polynomials $f \in \C[x_{1}^{\pm 1}, \cdots, x_{n}^{\pm 1}]$ and $n$-dimensional Fano varieties $X$. If $f$ is associated to $X$ under this correspondence then we say that $f$ is \emph{mirror dual} to $X$.

\begin{ex}
The Laurent polynomial $x+y+1/xy$ is known to be mirror dual to $\mathbb{P}^{2}$. The corresponding Newton polytope is:
\begin{center}
$P:=\mathrm{Newt} \left( x+y + \frac{1}{xy} \right) =$
\begin{tikzpicture}[baseline=-0.65ex,scale=0.5, transform shape]
\begin{scope}
\clip (-1.3,-1.3) rectangle (1.3cm,1.3cm); 
\filldraw[fill=cyan, draw=blue] (0,1) -- (1,0) -- (-1,-1) -- (0,1); 
\foreach \x in {-7,-6,...,7}{                           
    \foreach \y in {-7,-6,...,7}{                       
    \node[draw,shape = circle,inner sep=1pt,fill] at (\x,\y) {}; 
    }
}
 \node[draw,shape = circle,inner sep=4pt] at (0,0) {}; 
\end{scope}
\end{tikzpicture}
\end{center}
From the polygon $P$ we construct a toric variety $X_P$ by taking the spanning fan. In this case $X_P$ is again $\mathbb{P}^{2}$.
\end{ex}

In general the toric variety $X_{P}$ associated to the Newton polytope $P$ of $f$ mirror dual to $X$ will be Fano, and it is conjectured that $X_{P}$ admits a \emph{$\Q$-Gorenstein (qG-)} deformation, see \cite{ThreefoldsandDeformationsofSurfaceSingularities, FlipsFlopsandMinimalModels}, to $X$. The toric variety $X_{P}$ may be more singular than $X$, but this is compensated for by being able to use the language of toric geometry to describe the variety. There is an additional complication that the choice of mirror dual is not unique. To this end, Akhtar--Coates--Galkin--Kasprzyk~\cite{MinkowskiPolynomialsandMutations} introduced the notion of \emph{mutation} of a Laurent polynomial: a birational transformation transforming one mirror dual to $X$ to another mirror dual to $X$ {\cite[Lemma 1]{MinkowskiPolynomialsandMutations}. This notion of mutation for Laurent polynomials can be translated to a combinatorial operation on lattice polytopes. See~\cite{MinkowskiPolynomialsandMutations, MinimalityandMutationEquivalenceofPolygons} for the details. An important open question is to begin to classify mutation-equivalence classes of polytopes.

From the viewpoint of Mirror Symmetry, it is natural to restrict ourselves to the study of \emph{Fano} polytopes. Recall that a full-dimensional lattice polytope $P$ is Fano if the vertices $\mathcal{V}(P)$ are all primitive and if the origin lies in the strict interior of $P$. The Newton polytope of any Laurent polynomial mirror is necessarily Fano. Furthermore, when considering the spanning fan, it makes sense to restrict oneself to Fano polytopes. For an overview of Fano polytopes see~\cite{FanoPolytopes}.

An important mutation invariant of a Fano polygon is its singularity content, introduced by Akhtar--Kasprzyk~\cite{SingularityContent}. In order to describe this invariant, we first recall the definition of a cyclic quotient singularity. Consider the action of the cyclic group of order $R$, denoted $\mu_{R}$, on $\C^{2}$ via $(x,y) \mapsto (\epsilon^{a}x,\epsilon^{b}y)$. Here $\epsilon$ is a primitive $R$-th root of unity.  A \emph{quotient singularity} $\frac{1}{R}(a,b)$ is defined by the germ of the origin of $\mathrm{Spec}(\C[x,y]^{\mu_{R}})$. See~\cite{YoungPersonsguidetoCanonicalSingularities} for further details.

\begin{ex}
Consider a $\frac{1}{2}(1,1)$ singularity. Let $G= \Z / 2\Z$ and $\epsilon = -1$. The action of $G$ on $\C^{2}$ is given by $-1 \cdot (x,y) = (-x,-y)$, and
\begin{align*}
\mathrm{Spec}\left(\C[x,y]^{G} \right) &= \mathrm{Spec} \left( \C[x^{2},xy,y^{2}] \right) \\
& = \mathrm{Spec} \left( \C[u,v,w] / (uw-v^{2}) \right) \\
 & = \mathbb{V}(uw-v^{2}) \subset \C^{3}.
\end{align*}
\end{ex}

A quotient singularity $\frac{1}{R}(a,b)$ is cyclic if $\gcd(R,a)=\gcd(R,b)=1$.  Set $k = \gcd(a+b,R)$, so $R=kr$ and $a+b=k \tilde{c}$ for some $r,\tilde{c}\in\Z_{>0}$. The cyclic quotient singularity can be written as $\frac{1}{kr}(1,kc-1)$, where $ca \cong \tilde{c} \modb{R}$ 

Two important classes of cyclic quotient singularities are described by Kollar--Shepherd-Barron~\cite{ThreefoldsandDeformationsofSurfaceSingularities} and Akhtar--Kasprzyk~\cite{SingularityContent}. A cyclic quotient singularity $\frac{1}{kr}(1,kc-1)$ is
\begin{enumerate}
\item[(i)] a \emph{T-singularity} if $r \mid k$;
\item[(ii)] an \emph{R-singularity} if $k<r$. 
\end{enumerate}
In addition, a T-singularity is \emph{primitive} if $r=k$. The significance of these definitions comes when one attempts to smooth the cyclic quotient singularities via a qG-deformation. A cyclic quotient singularity is qG-smoothable if and only if it is a T-singularity, whereas R-singularities are rigid under qG-deformation.

\begin{ex}
A cyclic quotient singularity $\frac{1}{R}(1,1)$ is a T-singularity if and only if $R \in \{1,2,4\}$.
\end{ex}

Consider an arbitrary cyclic quotient singularity $\sigma = \frac{1}{kr}(1,kc-1)$ not necessarily satisfying either $r \mid k$ or $k<r$. There exists unique non-negative integers $n$ and $k_{0}$ such that $k=nr+k_{0}$. If $k_{0} >0$ then $\sigma$ qG-deforms to a $\frac{1}{k_{0}r}(1,k_{0}c-1)$ cyclic quotient singularity. Informally $\sigma$ decomposes as $n$ primitive T-singularities and an R-singularity; the T-part can be smoothed away leaving the R-singularity, which we call the \emph{residue}. More precisely, the residue of $\sigma$ is given by: 
\[ \mathrm{res}(\sigma) = \begin{cases} \varnothing,&\text{ if } k_{0} = 0;\\ \frac{1}{k_{0}r}(1,k_{0}c-1),&\text{ otherwise.} \end{cases}  \]
With notation as above, the \emph{singularity content} of $\sigma$ is denoted by the pair: 
\[ \mathrm{SC}(\sigma) = \left(n, \mathrm{res}(\sigma)\right). \]

Associated to a cyclic quotient singularity $\sigma = \frac{1}{R}(a,b)$ is a cone $C_{\sigma} = \mathrm{cone} \left( e_{1}, e_{2} \right)$ in the lattice $\Z^{2} + (a/r,b/r) \cdot \Z$, defined up to a change of basis. By abuse of notation we often confuse the distinction between cones and singularities; namely we refer to T-cones, primitive T-cones, R-cones and $\frac{1}{R}(a,b)$-cones. 

Given a cone $C_{\sigma} \subset N_{\R} = N \otimes \R$ where $N \cong \Z^{2}$, let $\rho_{1}, \rho_{2}$ be the primitive lattice points generating the rays of $C_{\sigma}$. There is a unique hyperplane $H$ through $\rho_{1}, \rho_{2}$, and $E = C_{\sigma} \cap H$ is the edge over which $C$ is defined. The decomposition of $\sigma$ has a description in the combinatorics of $E$. 

The \emph{lattice length}, denoted $l(E)$, of $E \subset N_{\R}$ is given by the value $\lvert E \cap N \rvert - 1$. The \emph{lattice height} $h(E)$ of $E$ is given by the lattice distance from the origin: that is, given the unique primitive inward pointing normal $n_{E} \in M= \mathrm{Hom}(N,\Z)$ of $E$, the height is given by $\lvert \langle v,n_{E} \rangle \rvert$, for any $v \in E$. There exist $n,r \in \Z_{\geq 0}$ such that $l=hn+r$. Divide $C$ into separate sub-cones $C_{0},\ldots,C_{n}$, where $C_{1},\ldots,C_{n}$ have lattice length $h$, and $C_{0}$ has lattice length $r$. The cones $C_{i}$, for $1 \leq i \leq n$, are primitive T-cones and $C_{0}$ is an R-cone.
\begin{figure}[H]
\begin{center}
\begin{tikzpicture}[transform shape]
\begin{scope}
\clip (-5.7,-0.3) rectangle (2.3cm,3.7cm); 
\filldraw[fill=cyan, draw=blue] (0,0) -- (-5,3)--(-2,3)--(0,0); 
\filldraw[fill=cyan, draw=red] (0,0) -- (-2,3) -- (1,3) -- (0,0);
\filldraw[fill=cyan, draw=blue] (0,0) -- (1,3) -- (2,3) -- (0,0);
\filldraw[fill=cyan, draw=red] (0,0) -- (1,3);
\filldraw[fill=cyan, draw=blue] (0,0) -- (2,3);
\filldraw[fill=cyan, draw=blue] (-5,3) -- (2,3);
\draw[<->, color = cyan](-5.2,0) -- node[cyan, scale = 1.2, anchor = east]{h}(-5.2,3);
\draw[<->, color = cyan](-5,3.2) -- node[cyan, scale = 1.2, anchor = south]{l}(2,3.2);
\foreach \x in {-7,-6,...,7}{                           
    \foreach \y in {-7,-6,...,7}{                       
    \node[draw,shape = circle,inner sep=1pt,fill] at (\x,\y) {}; 
    }
}
 \node[draw,shape = circle,inner sep=4pt] at (0,0) {}; 
\end{scope}
\end{tikzpicture}
\caption{Division of a cone of lattice length 7 and lattice height 3.}
\end{center}
\end{figure}
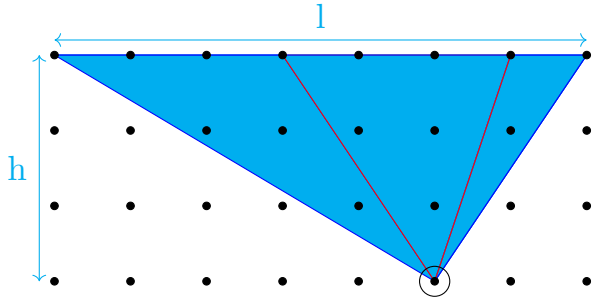

\begin{dfn}[{\cite[Definition 3.1]{SingularityContent}}]
Let $P \subset N_{\R}$ be a Fano polygon. Label the edges of $P$ clockwise $E_{1},\ldots ,E_{k}$. Let $C_{\sigma_{i}}$ be the cone over the edge $E_{i}$. Set 
\[ \mathrm{SC} \left( \sigma_{i} \right) = \left( n_{i},\mathrm{res} \left( \sigma_{i} \right) \right). \] 
Define the \emph{singularity content} of $P$ to be:
\[ \mathrm{SC}(P) = \left( \sum\limits_{i=1}^{k} n_{i},\mathcal{B} \right), \]
where $\mathcal{B} = \left\{ \mathrm{res} \left( \sigma_{1} \right) , \ldots, \mathrm{res} \left( \sigma_{k} \right) \right\}$ is a cyclically ordered set known as the \emph{basket of residual singularities}.
\end{dfn}

The singularity content of $P$, a combinatorial property, describes the singularities on $X_{P}$, a geometrical property.

\begin{dfn}[{\cite[Definition 1]{MirrorSymmetryandtheClassificationofOrbifoldDelPezzoSurfaces}}]
A del~Pezzo surface with cyclic quotient singularities is of class TG if it admits a qG-degeneration with reduced fibres to a normal toric del~Pezzo surface.
\end{dfn}

Del Pezzo surfaces of class TG are exactly those which (conjecturally) can be described by this application of Mirror Symmetry:

\begin{conj}[{\cite[Conjecture A]{MirrorSymmetryandtheClassificationofOrbifoldDelPezzoSurfaces}}]
There exists a bijective correspondence between the set of mutation-equivalence classes of Fano polygons and the set of qG-deformation equivalence classes of locally qG-rigid TG del~Pezzo surfaces with cyclic quotient singularities.
\end{conj}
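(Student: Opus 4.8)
The plan is to construct the two maps realizing the correspondence and then to check that they are mutually inverse, with the understanding that one of the two well-definedness claims is genuinely the crux. In the forward direction, starting from a Fano polygon $P$ I would form the toric del~Pezzo surface $X_{P}$ by taking the spanning fan. Each edge cone $C_{\sigma_{i}}$ contributes a cyclic quotient singularity whose singularity content $(n_{i}, \mathrm{res}(\sigma_{i}))$ splits it into $n_{i}$ primitive T-cones and one R-cone; since T-singularities are qG-smoothable while R-singularities are rigid, I would qG-smooth the entire T-part to produce a del~Pezzo surface $X(P)$ whose only singularities are the members of the basket $\mathcal{B}$. By construction $X(P)$ qG-degenerates to the toric surface $X_{P}$, so it is of class TG, and because its surviving singularities are precisely R-singularities it is locally qG-rigid. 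This defines a map $\Phi$ from Fano polygons to qG-deformation classes of locally qG-rigid TG del~Pezzo surfaces.

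The first thing to verify is that $\Phi$ factors through mutation-equivalence. Here I would invoke the flat-family construction attached to a single mutation: a mutation relating $P$ and $P'$ realizes $X_{P}$ and $X_{P'}$ as fibres of a qG-deformation over $\mathbb{P}^{1}$, so that $X_{P}$ and $X_{P'}$---and hence their T-smoothings $X(P)$ and $X(P')$---lie in a common qG-deformation class. Since singularity content is a mutation invariant, the baskets agree throughout, so $\Phi$ descends to a well-defined map on mutation-equivalence classes.

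For the inverse direction I would define a map $\Psi$ sending a locally qG-rigid TG del~Pezzo surface $X$ to the mutation class $[P]$ of any Fano polygon $P$ for which $X_{P}$ is a toric qG-degeneration of $X$; such a $P$ exists by the definition of class TG. That $\Phi$ and $\Psi$ are mutually inverse then reduces to two points. First, that qG-smoothing the T-part of $X_{P}$ recovers $X$ up to qG-deformation: this should follow from local qG-rigidity, which forces the residual singularities of $X$ to coincide with the basket of $P$ and pins $X$ down within its deformation class. Second, and this is the main obstacle, that $\Psi$ is well-defined, i.e.\ that any two toric qG-degenerations $X_{P}$ and $X_{P'}$ of the same surface $X$ yield mutation-equivalent polygons $P \sim P'$. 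This is a converse to the flat-family statement used above and is the genuinely hard, currently open part: it amounts to showing that the combinatorial mutation graph on Fano polygons detects every qG-deformation between their toric models, so that the entire collection of toric degenerations of a fixed TG surface forms a single mutation class. I expect the difficulty to concentrate entirely here, since the forward map and the bookkeeping of singularity content are comparatively formal, whereas controlling all toric degenerations of $X$ requires deformation-theoretic input well beyond the combinatorics of any single polygon.
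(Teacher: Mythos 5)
There is no ``paper's own proof'' to compare against here: the statement you were given is Conjecture~A of Akhtar--Coates--Corti et al., which the paper quotes verbatim and explicitly treats as an \emph{open conjecture} --- the author says only that recent classification results ``support'' it, and the paper's actual theorems (Theorems~\ref{1.8} and~\ref{1.9}) are combinatorial restrictions on singularity content, not steps toward proving the bijection. So any complete proof attempt would be new mathematics, and your proposal, by your own admission, is not complete: you flag the well-definedness of $\Psi$ (that all toric qG-degenerations of a fixed surface $X$ lie in a single mutation-equivalence class of polygons) as ``genuinely hard, currently open,'' and that is exactly right --- this is the essential content of the conjecture, not a technical point that can be deferred. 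A sketch that constructs both maps but leaves mutual inverseness resting on an acknowledged open problem is an articulation of the conjecture's structure, not a proof of it.

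Beyond that honest gap, two steps you present as ``comparatively formal'' are not. First, the forward map: local qG-smoothability of each T-cone does not by itself produce a \emph{global} qG-deformation of $X_{P}$ smoothing the entire T-part at once while leaving the R-singularities untouched; one needs a globalization/unobstructedness argument (deformations of singular surfaces can be obstructed, and the local-to-global spectral sequence does not vanish for free), and also that the resulting $X(P)$ is independent, up to qG-deformation, of the chosen smoothing. Second, your claim that local qG-rigidity ``pins $X$ down within its deformation class'' presupposes that qG-deformations of locally qG-rigid surfaces are locally trivial and that the relevant deformation space is connected --- again part of what the conjecture asserts rather than an available lemma. The one ingredient you invoke that is genuinely a theorem is Ilten's flat-family construction showing that a single mutation $P \sim P'$ realizes $X_{P}$ and $X_{P'}$ as fibres of a qG-pencil, which does make $\Phi$ well-defined on mutation classes modulo the smoothing issues above. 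In summary: your outline correctly identifies where the difficulty concentrates, but the statement remains a conjecture, and the proposal cannot be accepted as a proof of it.
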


Recent results support this conjecture~\cite{DelPezzoSurfaceswithaSingle1k11singularity,DelPezzoSurfaceswith1/311points,MinimalityandMutationEquivalenceofPolygons}, and understanding the possible values taken by the singularity content is an important open question. As a first step towards addressing this question, the two main results of this paper are:

\begin{thm}\label{1.8}
There are no Fano polygons with singularity content
\[
\left( 0, \left\{ k \times \frac{1}{R}(1,1) \right\} \right), \qquad\text{ where }k\in\Z_{>0},R \in \Z_{\geq 5}.
\] 
\end{thm}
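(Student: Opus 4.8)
The plan is to work entirely with the primitive vertices $v_1,\dots,v_m$ of $P$, listed clockwise, and to force a contradiction out of convexity. Since the singularity content is $(0,\cdot)$ we have $\sum_i n_i=0$, so every $n_i=0$; as an empty residue would require an edge of lattice length $0$, each edge cone $C_i=\mathrm{cone}(v_i,v_{i+1})$ must be a pure R-cone equal to its residue, hence a $\frac{1}{R}(1,1)$-cone, and $m=k$. First I would record the two facts I need about such a cone: its index is $R$, so $|\det(v_i,v_{i+1})|=R$; and the minimal resolution of $\frac{1}{R}(1,1)$ is a single $(-R)$-curve, so $C_i$ contains exactly one exceptional ray $w_i=\frac{1}{R}(v_i+v_{i+1})$, which is therefore a lattice point of $N$.

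The heart of the argument is to turn this into a linear recurrence among the $v_i$. Passing to the smooth toric resolution, whose rays in cyclic order are $\dots,v_i,w_i,v_{i+1},\dots$, the wall relation at $v_i$ reads $w_{i-1}+w_i=b_iv_i$ for some integer $b_i=-D_{v_i}^2$. Substituting the formulas for $w_{i-1},w_i$ gives $v_{i-1}+v_{i+1}=c_iv_i$ with $c_i:=Rb_i-2\equiv-2\modb{R}$. Thus the two neighbours of each vertex sum to an integer multiple of that vertex, with a prescribed residue modulo $R$.

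Next I would bound $c_i$ from above by convexity: pairing the recurrence with the primitive inner normal of $E_i$, and using that $v_{i-1}$ does not lie on $E_i$, gives $c_i<2$, so $c_i\le1$. Because $c_i\equiv-2\modb{R}$ and $R\ge5$, the smallest admissible positive value $R-2$ is already $\ge3$, so in fact $c_i\le-2$ for every $i$. I expect this sign — that every $c_i$ is negative and bounded away from $0$ — to be the decisive and most delicate point, as it asserts that each vertex sits far \emph{outside} the segment joining its neighbours, which is exactly the tension with a convex polygon winding only once.

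To finish, I would convert this into a rotation count. Taking $\det(\,\cdot\,,v_{i+1})$ of the recurrence and using $\det(v_i,v_{i+1})=-R$ yields $\det(v_{i-1},v_{i+1})=-Rc_i\ge2R>0$, so $v_{i+1}$ lies strictly more than a half-turn clockwise from $v_{i-1}$; summing these two-step angles, each exceeding $\pi$, against the total turning $4\pi$ of a once-winding polygon forces $m\le3$. The residual case $m=3$ falls to the same determinant identity, which on a triangle forces $c_i=-1$, incompatible with $c_i\equiv-2\modb{R}$. As every polygon has $m\ge3$, no Fano polygon with the stated singularity content can exist. The main obstacle I anticipate is making the winding step fully rigorous — controlling the angles and disposing of the $m=3$ boundary case — whereas the algebraic extraction of the recurrence is comparatively routine once the single-exceptional-ray description of $\frac{1}{R}(1,1)$ is in hand.
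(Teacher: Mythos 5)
Your proof is correct, and it takes a genuinely different route from the paper. The paper works with the transition matrices between consecutive $\frac{1}{R}(1,1)$-cones: it exhibits the one-parameter families $A_{n}^{(R)}$, uses congruences modulo $R$ and $R^{2}$ to force the number of cones $k$ to be a multiple of $2R$, and then applies the Poonen--Rodriguez-Villegas winding-number homomorphism $\Phi$ on $\widetilde{SL_{2}(\Z)}$ (via an explicit word in $S$ and $T$) to obtain $\sum n_{i}=12-(R-6)k$ and hence the fatal congruence $12\equiv 0\ (\mathrm{mod}\ R)$; the even case is only sketched. You instead extract the local vertex recurrence $v_{i-1}+v_{i+1}=c_{i}v_{i}$ with $c_{i}\equiv -2\ (\mathrm{mod}\ R)$ --- all of whose ingredients check out: $|\det(v_{i},v_{i+1})|=R$ forces $v_{i-1}+v_{i+1}\in\Z v_{i}$, and integrality of $\frac{1}{R}(v_{i}+v_{i+1})$ pins down the residue of $c_{i}$ --- then use convexity to get $c_{i}\le -2$ once $R\ge 5$, and replace the $\widetilde{SL_{2}(\Z)}$ machinery by an elementary angle count ($\det(v_{i-1},v_{i+1})>0$ makes each two-step angle exceed $\pi$, so $m<4$), with the triangle case killed by $c_{i}=-1\not\equiv -2$. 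Your version is shorter, uniform in the parity of $R$, and isolates the geometric obstruction (each vertex is too ``reflex'' relative to its neighbours); it also correctly lets the $R=3$ hexagon through, where $c_{i}=1$ is admissible. What the paper's heavier method buys is a template that transfers to mixed baskets and a quantitative identity ($\sum n_{i}=12-(R-6)k$) that simultaneously recovers the existence of the $R=3$ solution, whereas your recurrence argument leans on every edge cone being of the same type. The only points worth writing out carefully in a final version are the verification that $\frac{1}{R}(v_{i}+v_{i+1})$ is a lattice point for a $\frac{1}{R}(1,1)$-cone (immediate in the standard positions used in the paper, and $GL_{2}(\Z)$-invariant) and the bookkeeping that the clockwise angles $\theta_{i}\in(0,\pi)$ sum to exactly $2\pi$.
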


\begin{thm}\label{1.9}
There are no Fano polygons with singularity content 
\[ \left( 0 , \left\{ \frac{1}{R_{1}} \left( 1,1 \right), \frac{1}{R_{2}} \left( 1,1 \right), \frac{1}{R_{3}}\left( 1,1\right) \right\} \right), \qquad \text{ where } R_{i} \in \{3\} \cup \Z_{\geq 5}. \]
\end{thm}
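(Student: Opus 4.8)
The plan is to force $P$ to be a triangle, convert each of the three local cone types into a congruence, and package these as divisibility relations among $R_1,R_2,R_3$ that cannot hold simultaneously. First I would reduce to a triangle. Since the first entry of $\mathrm{SC}(P)$ is $0$ we have $\sum_i n_i = 0$, and as each $n_i\ge 0$ this forces $n_i = 0$ for every edge $E_i$. By the division $l(E_i) = h(E_i)\,n_i + r$ this gives $r = l(E_i)\ge 1$, so the remainder is nonzero and $\mathrm{res}(\sigma_i)\neq\varnothing$ for every edge. Because the basket has exactly three members (counted with multiplicity), $P$ has exactly three edges: it is a triangle with primitive vertices $v_1,v_2,v_3$ and $0$ in its interior. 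Each cone over an edge $[v_i,v_j]$ is then a single R-cone equal to its own residue $\tfrac1R(1,1)$, whose order equals the normalized area $D_{ij}:=\lvert\det(v_i,v_j)\rvert$. Thus $\{D_{12},D_{23},D_{31}\}=\{R_1,R_2,R_3\}\subset\{3\}\cup\Z_{\ge 5}$; in particular every $D_{ij}\ge 3$.

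The key lemma I would establish is a lattice criterion for the $(1,1)$ type: for primitive $u,u'\in N$ with $D=\lvert\det(u,u')\rvert$, the cone $\mathrm{cone}(u,u')$ is of type $\tfrac1D(1,1)$ if and only if $u+u'\in D\cdot N$. To prove it I would use $\mathrm{GL}_2(\Z)$ to normalise $u=(1,0)$ and $u'=(c,D)$ with $\gcd(c,D)=1$; comparing against the standard model $\mathrm{cone}\big((1,0),(-1,D)\big)$ shows the type is $\tfrac1D(1,1)$ exactly when $c\equiv -1\ (\mathrm{mod}\ D)$, which is precisely the statement $u+u'=(1+c,D)\in D\Z^2$.

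Applying this criterion to the three edges of the triangle, I would write $v_1+v_2=D_{12}\,u_3$, $v_2+v_3=D_{23}\,u_1$ and $v_3+v_1=D_{31}\,u_2$ with $u_k\in N$. Expanding $\det(v_1+v_2,\,v_3+v_1)$ bilinearly and orienting the vertices so that $\det(v_1,v_2)=D_{12}$, $\det(v_2,v_3)=D_{23}$, $\det(v_3,v_1)=D_{31}$, one gets $D_{12}D_{31}\det(u_3,u_2)=\det(v_1+v_2,v_3+v_1)=D_{23}-D_{12}-D_{31}$, so $D_{12}D_{31}\mid(D_{12}+D_{31}-D_{23})$. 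Permuting cyclically yields all three relations $D_iD_j\mid(D_i+D_j-D_k)$, whose three moduli are exactly the three pairwise products of $\{D_{12},D_{23},D_{31}\}$.

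To finish, order the values as $A\le B\le C$. The relation whose modulus is the product of the two largest reads $BC\mid(B+C-A)$. But $B+C-A>0$ (since $B\ge A$), while $B+C-A<BC$ because $(B-1)(C-1)>1$ for $B,C\ge 3$; hence $BC$ cannot divide the nonzero number $B+C-A$, a contradiction, so no such $P$ exists. I expect the main obstacle to be the lemma of the second paragraph: identifying the exact congruence that singles out the $(1,1)$ type among the $\tfrac1D(1,q)$, and recognising that it collapses to the basis-free condition $u+u'\in D\cdot N$, which is what makes the determinant expansion of the third paragraph produce the decisive relation; the triangle reduction and the final ordering argument are then routine.
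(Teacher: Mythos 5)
Your proof is correct, but it takes a genuinely different route from the paper. The paper encodes the boundary of the putative polygon as a closed $\mathbf{0}$-broken line, reads off its LSLS sequence $[1:R_1-2:1:m_1:1:R_2-2:1:m_2:1:R_3-2:1]$, applies Karpenkov's closure criterion $P_{10}=0$, $Q_{10}=1$ to get two polynomial equations in the unknown integer sines $m_1,m_2$, and then argues that the resulting quadratic-formula expression for $m_2$ is never an integer for $R_i\in\{3\}\cup\Z_{\geq 5}$. You instead reduce to a triangle exactly as one must (the $n_i=0$ forcing is right: $n_i=0$ gives $l(E_i)=r\geq 1$, so every edge contributes a nonempty residue and $k=3$), then replace the continued-fraction machinery with the basis-free characterisation that $\mathrm{cone}(u,u')$ is of type $\tfrac1D(1,1)$ iff $u+u'\in D\cdot N$ (correct: the standard cone $\mathrm{cone}\bigl((1,0),(-1,D)\bigr)$ satisfies it, the condition is $GL_2(\Z)$-invariant, and in the normalisation $u'=(c,D)$ it pins down $c\equiv -1\ \mathrm{mod}\ D$, the self-inverse class). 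The bilinear expansion of $\det(v_1+v_2,v_3+v_1)$ then yields $D_iD_j\mid(D_i+D_j-D_k)$ for all three pairs, and taking the pair with the two largest values gives $0<B+C-A<BC$ since all $D\geq 3$, a clean contradiction. What your approach buys: it is entirely elementary (no continued fractions, no case analysis on the $R_i$), it produces reusable necessary divisibility conditions on triangles of $(1,1)$-cones, and its final step is fully verified, whereas the paper's last line ("this expression is not integer") is asserted rather than proved. What the paper's approach buys: the LSLS/closure framework applies uniformly to baskets with more members and to residues that are not of type $(1,1)$, where your determinant identity would not close up so neatly. The one place to be careful in a write-up is the key lemma: you should state explicitly that the $GL_2(\Z)$-class of a cone with primitive generators and index $D$ is determined by $c\bmod D$ up to inversion, so that $c\equiv-1$ really does single out $\tfrac1D(1,1)$; your sketch already contains this and it is standard.
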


\noindent These two theorems are conjecturally equivalent to:
\begin{itemize}
\item There are no del~Pezzo surfaces admitting a toric degeneration whose topological Euler number is $0$ and singular locus consists of only isolated $\frac{1}{R}(1,1)$ cyclic quotient singularities, where $R \in \Z_{\geq 5}$;
\item There are no del~Pezzo surfaces admitting a toric degeneration whose topological Euler number is $0$ and singular locus consists of exactly three qG-rigid isolated cyclic quotient singularities $\frac{1}{R_{1}}(1,1)$, $\frac{1}{R_{2}}(1,1)$ and $\frac{1}{R_{3}}(1,1)$.
\end{itemize}

\section{Restrictions via Matrices}\label{Section2}

Let $P \subset N_{\R}$ be a Fano polygon with vertices $v_{1}, v_{2},\ldots,v_{k}$ labelled anticlockwise. By convention our subscripts are considered modulo $k$ to be in the range $\left\{ 1,\ldots, k \right\}$. Consider the set of matrices $\left\{ M_{i} \in GL_{2} \left( \Z \right) \right\}_{1 \leq i \leq k}$ satisfying
\[
M_{i} v_{i} = v_{i+1}, \qquad M_{i} v_{i+1} = v_{i+2}.
\]
It follows that 
\[  M_{k}M_{k-1} \cdots M_{1} = Id. \]

By understanding the matrix $M_{i}$ when $\mathrm{span}_{\R_{\geq 0}}(v_{i},v_{i+1})$ and $\mathrm{span}_{\R_{\geq 0}}(v_{i+1},v_{i+2})$ describe particular cyclic quotient singularities, we create restrictions on when this equality can hold. We start with the simple case of a polygon $P$ consisting entirely of $\frac{1}{3}(1,1)$ cones (we already know exactly one such polygon exists by Kasprzyk--Nill--Prince~\cite{MinimalityandMutationEquivalenceofPolygons}).

Let $E$ be an edge of a Fano polygon such that the cone over $E$ is a $\frac{1}{3}(1,1)$ cone. Assume without loss of generality that E has vertices $\begin{pmatrix} -1 & 3 \end{pmatrix}^{\intercal}$ and $\begin{pmatrix} -2 & 3 \end{pmatrix}^{\intercal}$. Further suppose that the edge adjacent to $E$ sharing the vertex $\begin{pmatrix} -2 & 3 \end{pmatrix}^{\intercal}$ also has a corresponding $\frac{1}{3}(1,1)$ cone.

The corresponding matrix $M=\begin{pmatrix} a & b \\ c & d \end{pmatrix} \in GL_{2}(\Z)$ satisfies:
\[
M \begin{pmatrix} -1 \\ 3 \end{pmatrix} = \begin{pmatrix} -2 \\ 3 \end{pmatrix} \qquad\text{ and }\qquad
\mathrm{det}(M)=1.
\]

The second condition follows from the fact that $M$ maps a $\frac{1}{3}(1,1)$ cone onto a $\frac{1}{3}(1,1)$ cone, and so lattice length and lattice height must be preserved. Hence $M$ is of the form 
\[ M = \begin{pmatrix} 3-2a & \frac{1-2a}{3} \\ 3a-3 & a \end{pmatrix}, \qquad \text{ for } a \in \Z. \]
The only remaining restriction is that $(1-2a)/3 \in \Z$ and so $a \equiv 2 \modb{3}$. Substituting $a=3n+2$, we obtain the set of matrices:
\[ A_{n} = \begin{pmatrix} -6n-1 & -2n-1 \\ 9n+3 & 3n+2 \end{pmatrix}, \qquad \text{ for } n \in \Z. \]
The image of the point $\begin{pmatrix} -2 & 3 \end{pmatrix}^{\intercal}$ under $A_{n}$, that is the second vertex of the second $\frac{1}{3}(1,1)$ cone, is given by:
\begin{align*}
A_{n} \begin{pmatrix} -2 \\ 3 \end{pmatrix} &= \begin{pmatrix} 6n-1 \\ -9n \end{pmatrix}.
\end{align*}
Note if $n<0$, convexity of the Fano polygon is broken. Therefore we have a 1-dimensional family of suitable matrices parametrised by $\Z_{\geq 0}$ each giving a point $v$ such that $\text{span}_{\R_{\geq 0}}\left( \begin{pmatrix} -2 & 3 \end{pmatrix}^{\intercal} ,v \right)$ is a cone representing a $\frac{1}{3}(1,1)$ singularity.

\begin{align*}
n=0 \longleftrightarrow& \begin{pmatrix} -1 \\ 0 \end{pmatrix} \\
n=1 \longleftrightarrow& \begin{pmatrix} 5 \\ -9 \end{pmatrix} \\
n=2 \longleftrightarrow& \begin{pmatrix} 11 \\ -18 \end{pmatrix} \\
\vdots&
\end{align*}

\begin{lem}\label{4.1}
A Fano polygon consisting only of $\frac{1}{3}(1,1)$ R-cones satisfies
\[ M_{k} M_{k-1} \cdots M_{1} = A_{n_{1}} A_{n_{2}} \cdots A_{n_{k}}. \]
\end{lem}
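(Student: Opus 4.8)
The plan is to show that the composite $M_k M_{k-1}\cdots M_1$, which must equal the identity, decomposes as a product of the matrices $A_{n_i}$ constructed above. The key observation is that the derivation preceding the lemma is entirely local: it shows that whenever two consecutive edges $E_i,E_{i+1}$ of a Fano polygon both give $\frac{1}{3}(1,1)$ cones, and $E_i$ sits in the normalised position with vertices $(-1,3)^{\intercal}$ and $(-2,3)^{\intercal}$, then the unique $M \in GL_2(\Z)$ with $M v_i = v_{i+1}$, $M v_{i+1}=v_{i+2}$ and $\det M = 1$ must be one of the $A_n$. So the whole content of the lemma is that this local normal form can be applied simultaneously at every vertex.

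\textbf{Step 1.} First I would record the defining relations $M_i v_i = v_{i+1}$ and $M_i v_{i+1} = v_{i+2}$ for each $i$, and note (as the excerpt already does) that telescoping gives $M_k M_{k-1}\cdots M_1 v_1 = v_1$ and similarly on $v_2$, hence $M_k\cdots M_1 = \mathrm{Id}$ since $v_1,v_2$ are linearly independent. This fixes the left-hand side as the identity and shows the product structure is forced by the geometry.

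\textbf{Step 2.} The heart of the argument is to show each individual $M_i$ equals some $A_{n_i}$. The difficulty is that the computation preceding the lemma assumed a \emph{specific} placement of the edge $E_i$ (namely vertices at $(-1,3)^{\intercal}$ and $(-2,3)^{\intercal}$), whereas in an arbitrary polygon the edges sit anywhere in $N_{\R}$. The main obstacle, then, is handling this change of coordinates cleanly. I would argue as follows: for each $i$ there is a $G_i \in GL_2(\Z)$ carrying the ordered pair $(v_i,v_{i+1})$ to the standard pair $((-1,3)^{\intercal},(-2,3)^{\intercal})$, which is possible precisely because both cones are $\frac{1}{3}(1,1)$ cones and hence $GL_2(\Z)$-equivalent to the standard model. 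In these coordinates the hypotheses of the earlier computation hold verbatim, so $G_{i+1} M_i G_i^{-1} = A_{n_i}$ for some $n_i \in \Z_{\ge 0}$, the non-negativity coming from the convexity observation that $n<0$ violates convexity of $P$.

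\textbf{Step 3.} Finally I would assemble the local identities into the global one. Writing $M_i = G_{i+1}^{-1} A_{n_i} G_i$ and forming the product, the intermediate change-of-basis matrices telescope: $M_k\cdots M_1 = G_{k+1}^{-1} A_{n_k} G_k G_k^{-1} A_{n_{k-1}} \cdots G_2 G_2^{-1} A_{n_1} G_1 = G_{k+1}^{-1}(A_{n_k}\cdots A_{n_1}) G_1$. Since the indexing is cyclic with $G_{k+1}=G_1$, the outer conjugation collapses and the product of the $M_i$ is conjugate to, and in fact after absorbing $G_1$ equals, the product of the $A_{n_i}$ in the stated order. I would expect the bookkeeping in this telescoping step, and in particular verifying that the normalising matrices at the two ends agree under the cyclic convention on subscripts, to be the delicate point; everything else reduces to the single-cone calculation already carried out in the text.
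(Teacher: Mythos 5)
Your high-level plan --- normalise locally, identify each $M_i$ with some $A_{n_i}$ up to change of basis, then telescope --- is the same as the paper's, but the pivotal relation in Step~2 is false, and Step~3 inherits the damage. Write $e = (-1,3)^{\intercal}$, $f = (-2,3)^{\intercal}$, and let $G_j \in GL_2(\Z)$ be your normaliser, so $G_j v_j = e$ and $G_j v_{j+1} = f$. Then
\[
\left( G_{i+1} M_i G_i^{-1}\right) e = G_{i+1} M_i v_i = G_{i+1} v_{i+1} = e,
\qquad
\left( G_{i+1} M_i G_i^{-1}\right) f = G_{i+1} v_{i+2} = f,
\]
so $G_{i+1} M_i G_i^{-1}$ fixes two linearly independent vectors and is therefore $\mathrm{Id}$, not $A_{n_i}$. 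The correct local statement conjugates by the \emph{same} matrix on both sides: $G_i M_i G_i^{-1} = A_{n_i}$, because in the coordinates where edge $i$ is standard, $M_i$ sends $e \mapsto f$ and carries the standard $\frac{1}{3}(1,1)$ cone onto the adjacent $\frac{1}{3}(1,1)$ cone, which is precisely the configuration analysed before the lemma. With this corrected relation, however, the product $M_k \cdots M_1 = \prod_i G_i^{-1} A_{n_i} G_i$ no longer telescopes on its own; you need the supplementary recursion $G_{i+1} = G_i M_i^{-1} = A_{n_i}^{-1} G_i$ (valid by uniqueness of $G_{i+1}$). Taking $G_1 = \mathrm{Id}$, this gives $G_i^{-1} = A_{n_1} \cdots A_{n_{i-1}} = M_{i-1} \cdots M_1$ and hence $M_i = A_{n_1} \cdots A_{n_{i-1}} A_{n_i} A_{n_{i-1}}^{-1} \cdots A_{n_1}^{-1}$, after which the product collapses to $A_{n_1} \cdots A_{n_k}$ in the stated order --- this is exactly the paper's proof. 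Note finally that even taking your Step~2 at face value, your Step~3 produces $G_1^{-1}\left(A_{n_k} \cdots A_{n_1}\right)G_1$, i.e.\ the $A$'s in \emph{reverse} order and still conjugated; ``absorbing $G_1$'' does not turn this into $A_{n_1} \cdots A_{n_k}$.
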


\begin{proof}
We have that
\begin{align*}
M_{1} =& A_{n_{1}} \\
M_{2} =& M_{1} A_{n_{2}} M_{1}^{-1} = A_{n_{1}}A_{n_{2}}A_{n_{1}}^{-1} \\
M_{3} =& M_{2} M_{1} A_{n_{2}} M_{1}^{-1} M_{2}^{-1} = A_{n_{1}} A_{n_{2}} A_{n_{3}} A_{n_{2}}^{-1} A_{n_{1}}^{-1} \\
\vdots& \\
M_{i} =& A_{n_{1}} A_{n_{2}} \cdots A_{n_{i-1}} A_{n_{i}} A_{n_{i-1}}^{-1} \cdots A_{n_{2}}^{-1} A_{n_{1}}^{-1} 
\end{align*}
The identity follows by substitution.
\end{proof}

The problem remains to test when the identity $A_{n_{1}} A_{n_{2}} \cdots A_{n_{k}} =Id$ holds. First consider the $A_{n_{i}}$ modulo $3$:
\begin{align*} 
A_{n_{1}} \equiv& \begin{pmatrix} 2 & 2n_{1}+1 \\ 0 & 2 \end{pmatrix} \modb{3}, \\
A_{n_{1}} A_{n_{2}}  \equiv& \begin{pmatrix} 2 & 2n_{1}+1 \\ 0 & 2 \end{pmatrix} \begin{pmatrix} 2 & 2n_{2}+1 \\ 0 & 2 \end{pmatrix} \equiv \begin{pmatrix} 1 & n_{1}+n_{2}+1 \\ 0 & 1 \end{pmatrix} \modb{3},  \\
A_{n_{1}} A_{n_{2}} A_{n_{3}}  \equiv& \begin{pmatrix} 1 & n_{1}+n_{2}+1 \\ 0 & 1 \end{pmatrix} \begin{pmatrix} 2 & 2n_{3}+1 \\ 0 & 2 \end{pmatrix} \\  
\equiv& \begin{pmatrix} 2 & 2(n_{1}+n_{2}+n_{3}) \\ 0 & 1 \end{pmatrix} \modb{3},  \\
\vdots&
\end{align*}

Note that the multiplication of an odd number of matrices can never equal the identity matrix modulo $3$, since the upper left entry is $2 \not\equiv 1 \modb{3} $. Indeed this follows by noting $A_{n_{7}} A_{n_{6}} \cdots A_{n_{1}} =A_{n_{1}+\cdots+ n_{7}}$ and then induction. Therefore if $A_{n_{1}} A_{n_{2}} \cdots A_{n_{k}} =Id$, it follows that $k$ is even. Looking modulo $9$ further narrows down the possibilities. We have that:
\begin{align*} 
A_{n_{1}} A_{n_{2}} \equiv& \begin{pmatrix} * & * \\ 6 & * \end{pmatrix} \not\equiv Id \modb{9}, \\
A_{n_{1}} A_{n_{2}}A_{n_{3}}A_{n_{4}} \equiv& \begin{pmatrix} * & * \\ 3 & * \end{pmatrix} \not\equiv Id \modb{9}.
\end{align*}
Therefore the smallest possible value of $k$ satisfying $A_{n_{1}} A_{n_{2}} \cdots A_{n_{k}} =Id$ is $6$.

Finally we use the fact that for a Fano polygon, the boundary is a closed loop that wraps around the origin once. We shall use the winding number defined in~\cite{LatticePolygonsandNumber12}, which we now describe.

Considering $SL_{2}(\R)$ as a topological space, the fundamental group is given by $\pi_{1}\left( SL_{2}(\R) \right) = \Z$. The universal cover, denoted $ \widetilde{SL_{2}(\R)}$, is the connected topological group fitting into the exact sequence:
\[ 0 \longrightarrow \Z \longrightarrow \widetilde{SL_{2}(\R)} \longrightarrow SL_{2}(\R) \longrightarrow 0. \]
There is no description of $ \widetilde{SL_{2}(\R)}$ as a group of matrices subject to some algebraic conditions. The commonly used description is that of pairs $\left( M, [\gamma] \right)$, where
\[
M= \begin{pmatrix} a & b \\ c & d \end{pmatrix} \in SL_{2}(\Z)
\]
and $[\gamma]$ is a homotopy equivalence class of paths in $\R^{2} \backslash \{ \mathbf{0} \}$ from $\begin{pmatrix} 0 & 1 \end{pmatrix}^{\intercal}$ to $\begin{pmatrix} c & d \end{pmatrix}^{\intercal}$. Hence $ \widetilde{SL_{2}(\R)}$ has the structure of a group via the composition law
\[ \left( M_{1}, [\gamma_{1}] \right) \cdot \left( M_{2}, [\gamma_{2}] \right) = \left( M_{1} M_{2} , [\gamma_{2} \star \gamma_{1}] \right),  \]
where $\star$ denotes concatenation.  

Define $\widetilde{SL_{2}(\Z)}$ to be the inverse image of $SL_{2}(\Z)$ under $\widetilde{SL_{2}(\R)} \rightarrow SL_{2}(\R)$. Note this is not a covering space of $SL_{2}(\Z)$ since it is not a connected topological space. Lift each matrix $A_{n_{i}}$ to $\widetilde{SL_{2}(\Z)}$ by equipping with an appropriate straight line path denoted $\gamma_{i}$. The algebraic condition on the $A_{n_{i}}$ then becomes
\begin{equation}\label{eq1}
\left( A_{n_{1}}, [\gamma_{1}] \right) \cdot \left( A_{n_{2}}, [\gamma_{2}] \right) \cdot \cdots \cdot \left( A_{n_{k}}, [\gamma_{k}] \right) = \left( Id, [\text{anticlockwise loop}] \right).
\end{equation}
In Poonen--Rodriguez-Villegas~\cite{LatticePolygonsandNumber12}, a homomorphism $\Phi: \widetilde{SL_{2}(\Z)} \rightarrow \Z$ is introduced to act as a winding number. The aim is to apply $\Phi$ to both sides of the above equality to obtain an extra condition on $k$.

Similarly to how $SL_{2}(\Z)$ is generated by 
\[ S=\begin{pmatrix} 0 & -1 \\ 1 & 0 \end{pmatrix} \qquad \text{ and } \qquad T= \begin{pmatrix} 1 & 1 \\ 0 & 1 \end{pmatrix}, \]
it is known that $\widetilde{SL_{2}(\Z)}$ is generated by the two elements $\tilde{S}$ and $\tilde{T}$ obtained from lifting $S$ and $T$ to $ \widetilde{SL_{2}(\R)}$ by equipping them with the straight line path from $\begin{pmatrix} 0 & 1 \end{pmatrix}^{\intercal}$ to $\begin{pmatrix} 1 & 0 \end{pmatrix}^{\intercal}$ and the trivial path respectively. Furthermore it is shown in~\cite{LatticePolygonsandNumber12} that:
\[ \Phi (\tilde{S})= -3 \qquad \text{ and } \qquad \Phi(\tilde{T})= 1. \]
It is routine to check that $(\tilde{S})^{4} = \left( Id, [\text{anticlockwise loop}] \right)$ and so 
\[ \Phi \left( Id, [\text{anticlockwise loop}] \right) = -12. \]
It remains to calculate $\Phi \left( A_{n_{i}}, [\gamma_{i}] \right)$. By using an algorithm of Conrad~\cite{SL2Z}, we obtain the expression:
\[ A_{n_{i}} = T S^{-1}  T^{-2} S^{-1} T^{-(n_{i}+1)} S T^{-3}. \]
After lifting to $\widetilde{SL_{2}(\Z)}$ and applying the winding number homomorphism we obtain: 
\[ \Phi \left( A_{n_{i}}, [\gamma_{i}] \right) = -2-n_{i}. \] 
Applying $\Phi$ to both sides of~\eqref{eq1} gives the expression:
\[ \sum\limits_{i=1}^{k} n_{i} =12-2k. \]
If $k>6$ this implies $ \sum\limits_{i=1}^{k} n_{i} < 0$, but convexity demands the $n_{i}$ to be positive and so there are no solutions. The only remaining case is $k=6$, for which the equation becomes 
\[ \sum\limits_{i=1}^{k} n_{i} =0. \]
Therefore there is a single possible solution given by $k=6$ and $n_{i}=0$. This recovers the known Fano polygon consisting of only $6 \times \frac{1}{3}(1,1)$ R-singularities.

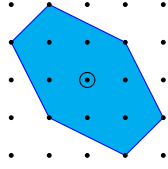
\begin{figure}[H]
\begin{center}
\begin{tikzpicture}[scale=0.5, transform shape]
\begin{scope}
\clip (-2.3,-2.3) rectangle (2.3cm,2.3cm); 
\filldraw[fill=cyan, draw=blue] (-1,2) -- (1,1) -- (2,-1) -- (1,-2) -- (-1,-1) -- (-2,1) -- (-1,2); 
\foreach \x in {-7,-6,...,7}{                           
    \foreach \y in {-7,-6,...,7}{                       
    \node[draw,shape = circle,inner sep=1pt,fill] at (\x,\y) {}; 
    }
}
 \node[draw,shape = circle,inner sep=4pt] at (0,0) {}; 
\end{scope}
\end{tikzpicture}
\end{center}
\caption{Fano polygon with singularity content $\left( 0, \left\{ 6 \times \frac{1}{3}(1,1) \right\} \right)$}.
\end{figure}

\noindent We now generalise our approach to prove Theorem~\ref{1.8}.

\begin{proof}[Proof of Theorem~\ref{1.8}]
First assume $R$ odd. Consider the standard position of the $\frac{1}{R}(1,1)$ cone to have vertices $\left(-(R+1)/2,R \right)$ and $ \left(-(R-1)/2,R \right)$. Then we are looking for $A \in SL_{2}(Z)$ such that
\[ A \begin{pmatrix} -\frac{R-1}{2} \\ R \end{pmatrix} = \begin{pmatrix} -\frac{R+1}{2} \\ R \end{pmatrix} \qquad \text{ and } \qquad \mathrm{det}(A)=1.\]
Such a matrix takes the form
\[ A  =\begin{pmatrix} \frac{-aR-a+2R}{R-1} & \frac{R-aR-a-1}{2R} \\ \frac{2R(a-1)}{R-1} & a \end{pmatrix}. \]
The entries of $A$ belong to $\Z$ if and only if 
\[ a \equiv 1 \modb{(R-1)/2}, \qquad \text{ and } \qquad a \equiv -1 \modb{R} .\]
This implies that $a=2R-1+n ((R-1)R)/2$ for some $n \in \Z$. Making this substitution into $A$ gives:
\[ A_{n}^{(R)} = \begin{pmatrix} -n\frac{R+1}{2}R-2R-1 & -n\frac{R^{2}-1}{4} - R \\ nR^{2}+4R & 2R-1+n\frac{R-1}{2}R \end{pmatrix}. \]
The problem is reduced to testing when the identity  $A_{n_{1}}^{(R)} A_{n_{2}}^{(R)} \cdots A_{n_{k}}^{(R)} =Id$ can hold using a generalised version of Lemma~\ref{4.1}. Studying $A_{n_{i}}^{(R)}$ modulo $R$:
\begin{align*}
A_{n_{1}}^{(R)} \equiv& \begin{pmatrix} -1 & (\frac{R^{2}-1}{4})n_{1} \\ 0 & -1 \end{pmatrix} \not\equiv Id \modb{R}, \\
A_{n_{1}}^{(R)} A_{n_{2}}^{(R)}  \equiv& \begin{pmatrix} 1 & \frac{R^{2}-1}{4}(-n_{1}-n_{2}) \\ 0 & 1 \end{pmatrix} \equiv Id \text{ , if } n_{1}+n_{2}\equiv 0 \modb{R},  \\
A_{n_{1}}^{(R)} A_{n_{2}}^{(R)} A_{n_{3}}^{(R)}  \equiv& \begin{pmatrix} -1 & \frac{R^{2}-1}{4}(n_{1}+n_{2}+n_{3}) \\ 0 & -1 \end{pmatrix} \equiv A_{n_{1}+n_{2}+n_{3}}^{(R)} \not\equiv Id \modb{R}.  \\
\end{align*} 
Continuing inductively there cannot be a solution if $k$ is odd. Furthermore for $k$ even, the identity $A_{n_{1}}^{(R)} A_{n_{2}}^{(R)} \cdots A_{n_{k}}^{(R)} =Id$ holds only if $\sum\limits_{i=1}^{k} n_{i} \equiv 0 \modb{R}$. Studying the product of $A_{n_{i}}$'s modulo $R^{2}$, observe that:
\[ \prod_{i=1}^{k} A_{n_{i}}^{(R)} \equiv \begin{pmatrix} * &\ *\  \\ (-1)^{k}4kR &\ *\ \end{pmatrix} \modb{R^2}, \]
and so $A_{n_{1}}^{(R)} A_{n_{2}}^{(R)} \cdots A_{n_{k}}^{(R)} =Id$ holds only if $k$ is a multiple of $R$. The smallest possible value for $k$ is $2R$. Finally, appealing to the winding number argument calculate that:
\[ A_{n}^{(R)} = T S^{-1} (T^{-2}S^{-1})^{\frac{R-3}{2}} T^{-2} S^{-1} T^{-(n+2)} S^{-1} (T^{-2}S^{-1})^{\frac{R-5}{2}} T^{-2} S T^{-3},  \]
and so $\Phi \left( A_{n}^{(R)}, [\gamma_{i}] \right) =-6+R-n$. Applying $\Phi$ to~\eqref{eq1} obtain 
\[ \sum\limits_{i=1}^{k} n_{i} =12 - \left( R-6 \right)k. \]
Since $k$ must be a multiple of $2R$, and $\sum\limits_{i=1}^{k} n_{i}$ must be congruent to $0$ modulo $R$,
\[ 12 \equiv 0 \modb{R}. \]
This implies $R \mid 12$ and since $R$ is odd and greater or equal 5, there are no solutions. The case $R$ even follows similarly.
\end{proof}

Note that $R=3$ satisfies the congruence $12 \equiv 0 \modb{R}$ corresponding to the fact that there is a solution in this case

\section{Restrictions via Continued Fractions}

In this section, we use results on continued fractions to prove Theorem~\ref{1.9}. The geometry of continued fractions can be studied in Karpenkov~\cite{GeometryofContinuedFractions}.

\subsection{Continued Fractions}

\begin{dfn}
For $a_{0}, a_{1}, \cdots, a_{k} \in \R$, consider the \emph{continued fraction}:
\[ [a_{0}: a_{1} : \cdots : a_{k}] = a_{0} + \cfrac{1}{a_{1}+ \cfrac{1}{a_{2}+ \cfrac{1}{\ldots + \frac{1}{a_{k}}}}}. \]
The numbers $a_{i}$ are called the \emph{elements} of the continued fraction. A continued fraction is \emph{odd/even} if there are an odd/even number of elements.
\end{dfn}

There  uniquely exist polynomials $P_{k}$ and $Q_{k}$ in variables $a_{i}$ satisfying:
\[ [a_{0}: a_{1} : \cdots : a_{k}] = \frac{P_{k}(a_{0},\ldots,a_{k})}{Q_{k}(a_{0},\ldots,a_{k})}, \qquad \text{ and } \qquad P_{k}(0,\ldots,0) + Q_{k}(0,\ldots,0) =1. \]
The first few of these polynomials are:
\begin{align*}
[a_{0}] &= \frac{P_{0}(a_{0})}{Q_{0}(a_{0})} = \frac{a_{0}}{1}, \\
[a_{0}:a_{1}] &= \frac{P_{1}(a_{0},a_{1})}{Q_{1}(a_{0},a_{1})} = \frac{a_{0}a_{1}+1}{a_{1}}, \\
[a_{0}:a_{1}:a_{2}] &= \frac{P_{2}(a_{0},a_{1},a_{2})}{Q_{2}(a_{0},a_{1},a_{2})} = \frac{a_{0}a_{1}a_{2}+a_{0}+a_{2}}{a_{1}a_{2}+1}.
\end{align*}
The polynomials $P_{k}$ and $Q_{k}$ satisfy the recursions:
\[ P_{k} = a_{k}P_{k-1} + P_{k-2}, \qquad \text{ and } \qquad Q_{k} = a_{k}Q_{k-1} + Q_{k-2}. \]

\subsection{Integer Geometry}

\begin{dfn}
Consider an integer triangle $\Delta ABC$, that is a triangle whose vertices are the integer points $A$, $B$ and $C$. The \emph{integer area} of $\Delta ABC$, denoted $l \text{Area}(\Delta ABC)$, is given by the index of the sublattice generated by the line segments $AB$ and $AC$ thought of as vectors in the integer lattice.
\end{dfn}

\begin{dfn}
Consider an integer angle $\angle ABC$, that is an angle between two integer lines based at an integer point. The \emph{integer sine} of $\angle ABC$, denoted $l\mathrm{sin}(\angle ABC)$, is given by 
\[ l \mathrm{sin}(\angle ABC) = \frac{l\mathrm{Area}(\Delta ABC)}{l(AB)l(BC)}. \]
\end{dfn}

\begin{dfn}
A \emph{broken line} is defined by $L = A_{0} A_{1} \cdots A_{n} = \bigcup_{i=0}^{n-1} L_{i} $, where $L_{i}$ is the line segment between the integer points $A_{i}$ and $A_{i+1}$. Let $L$ be an integer broken line that does not contain the origin $\mathbf{0} \in \Z^{2}$. If all the $L_{i}$ are at lattice height $1$, then $L$ is called an \emph{$\mathbf{0}$-broken line}.
\end{dfn}

\begin{dfn}
Let $A_{0} A_{1} \cdots A_{n}$ be an $\mathbf{0}$-broken line. Associate to the broken line its \emph{lattice-signed-length-sine (LSLS) sequence} given by $(a_{0},a_{1},\ldots,a_{2n-2})$, where
\begin{align*}
a_{0} =& \sgn(A_{0} \mathbf{0} A_{1} ) \cdot l(A_{0}A_{1}), \\
a_{1} =& \sgn(A_{0} \mathbf{0} A_{1}) \cdot \sgn(A_{1} \mathbf{0} A_{2}) \cdot \sgn(A_{0}A_{1}A_{2}) \cdot l \mathrm{sin} (\angle A_{0}A_{1}A_{2}), \\
a_{2} =& \sgn(A_{1} \mathbf{0} A_{2} ) \cdot l(A_{1}A_{2}), \\
\vdots& \\
a_{2n-3} =& \sgn(A_{n-2} \mathbf{0} A_{n-1}) \cdot \sgn(A_{n-1} \mathbf{0} A_{n}) \cdot  \\
& \qquad  \sgn(A_{n-2}A_{n-1}A_{n}) \cdot l \mathrm{sin} (\angle A_{n-2}A_{n-1}A_{n}), \\
a_{2n-2} =& \sgn(A_{n-1} \mathbf{0} A_{n} ) \cdot l(A_{n-1}A_{n}), \\
\end{align*}
and $\sgn(ABC) = \begin{cases} 1,&\text{ if } (BA,BC) \text{ is orientated positively;} \\ 0,&\text{ if } A,B,C \text{ are collinear;} \\ -1,&\text{ if } (BA,BC) \text{ is orientated negatively.}\end{cases}$
\end{dfn}

Given an $\mathbf{0}$-broken line the LSLS sequence measures the lattice lengths and lattice sine of the angles as we travel along the broken line, up to some change in sign. Since lattice length and lattice sign are invariant under $GL(\Z^{2})$ transformations, so is the LSLS sequence of a broken line.

\subsection{LSLS sequence of Fano Polygons with R-singularities}

Let $C$ be the cone over the edge of a Fano polygon.

\begin{dfn}
The \emph{sail} $S(C)$ of $C$ is given by $\conv\left( C \backslash \{ \mathbf{0} \} \cap \Z^{2} \right)$.
\end{dfn}

\begin{lem}
The boundary $\delta S(C)$ of the sail of a cone $C$ defines an $\mathbf{0}$-broken line.
\end{lem}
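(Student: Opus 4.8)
The plan is to verify directly the three defining properties of a $\mathbf{0}$-broken line for the compact polygonal chain comprising $\delta S(C)$: that it is an integer broken line (its vertices lie in $\Z^{2}$), that it avoids the origin, and — the substantive point — that each of its edges sits at lattice height $1$. The first two are quickly disposed of. Since $C$ is a proper (pointed) cone, there is a linear functional strictly positive on $C\setminus\{\mathbf0\}$, hence strictly positive at every point of $S(C)=\conv\big((C\setminus\{\mathbf0\})\cap\Z^{2}\big)$; in particular $\mathbf0\notin S(C)$, which gives the ``does not contain $\mathbf{0}$'' condition. The set $S(C)$ is an unbounded convex region whose boundary consists of two rays parallel to the generating rays of $C$ together with a finite polygonal chain $A_{0}A_{1}\cdots A_{n}$ joining the primitive lattice points on those two rays; this chain is $\delta S(C)$ read as a broken line, and its vertices, being vertices of a convex hull of lattice points, lie in $\Z^{2}$.

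The heart of the matter is showing that every edge $E=[A_{i},A_{i+1}]$ has lattice height $1$. I would argue by contradiction. Let $n_{E}\in M$ be the primitive inward normal and $h=\langle A_{i},n_{E}\rangle=\langle A_{i+1},n_{E}\rangle$ the lattice height, oriented so that $S(C)\subseteq\{\langle\,\cdot\,,n_{E}\rangle\ge h\}$ and $\langle\mathbf0,n_{E}\rangle=0$, and suppose $h\ge 2$. Since $l(E)\ge 1$, the edge contains two consecutive lattice points, which after relabelling I may take to be $A:=A_{i}$ and $B:=A_{i}+e$, where $e$ is the primitive lattice vector along $E$; both lie in $C$. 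The triangle $\conv(\mathbf0,A,B)\subseteq C$ then has integer area $\lvert\det(A,e)\rvert=h\ge 2$, so by Pick's theorem it contains a lattice point $p$ distinct from its three vertices. Because $A$ and $B$ are consecutive on $E$, no such $p$ lies on the segment $[A,B]$; hence $p$ lies on $[\mathbf0,A]$, on $[\mathbf0,B]$, or in the interior, and in each case $0<\langle p,n_{E}\rangle<h$. Thus $p\in(C\setminus\{\mathbf0\})\cap\Z^{2}\subseteq S(C)$ while $\langle p,n_{E}\rangle<h$, contradicting $S(C)\subseteq\{\langle\,\cdot\,,n_{E}\rangle\ge h\}$. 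Therefore $h=1$, and combining with the first paragraph shows $\delta S(C)$ is a $\mathbf{0}$-broken line.

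The main obstacle is precisely this last step, namely producing the auxiliary lattice point strictly on the origin side of the supporting line of $E$. A naive area bound only yields an extra lattice point somewhere in $\conv(\mathbf0,A,B)$, which a priori could sit on the edge $[A,B]$ itself at height exactly $h$ and so give no contradiction. The device of passing to two \emph{consecutive} lattice points of $E$, so that $[A,B]$ is primitive and carries no interior lattice point, is what forces the extra point to lie strictly below the edge; without this refinement the argument fails when $l(E)\ge 2$. Once the height-$1$ property is secured for every edge, the three conditions together establish the claim.
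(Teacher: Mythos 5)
Your proof is correct and follows essentially the same route as the paper's: both reduce the height-one claim for an edge $[A_{i},A_{i+1}]$ of $\delta S(C)$ to the absence of extra lattice points in the triangle $\conv\left(\mathbf{0},A_{i},A_{i+1}\right)$, converting that into an area, and hence a height, statement via Pick's theorem. Your version is in fact the more careful of the two: by passing to a primitive sub-segment of the edge you correctly exclude the possibility that the extra lattice point produced by Pick lies on the edge itself at height $h$ (where it would give no contradiction), a case the paper's one-line chain of equivalences (``no interior points $\Leftrightarrow$ Euclidean area $1/2$'') quietly elides when $l(E)\geq 2$, as happens for instance for the $\frac{1}{9}(1,5)$ cone discussed later in the section.
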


\begin{proof}
We need to show that each component $L_{i}$ of the broken line $\delta S(C) = A_{0}A_{1}\cdots A_{n}$ is at lattice height $1$. Consider the line segment $L_{i}$ with vertices $A_{i}$ and $A_{i+1}$. By the definition of $S(C)$, there are no interior points in $\conv\left( \mathbf{0}, A_{i}, A_{i+1} \right)$ which is equivalent to the Euclidean area of $\conv\left( \mathbf{0}, A_{i}, A_{i+1} \right)$ being $1/2$ which is equivalent to the lattice height of $L_{i}$ equalling $1$.
\end{proof}

Using this lemma, associate to a cone an LSLS sequence.

\begin{ex}
Consider a $\frac{1}{R}(1,1)$ R-singularity. First suppose $R$ is even. Since $\frac{1}{2}(1,1)$ and $\frac{1}{4}(1,1)$ are T-singularities, assume $R\geq 6$. Without loss of generality the $\frac{1}{R}(1,1)$ cone has ray generators $(-1,R/2)$ and $(1,R/2)$. The corresponding broken line then has vertices $(-1,R/2)$, $(0,1)$ and $(1,R/2)$ giving a LSLS sequence of $[1:R-2:1]$. The odd case with $R>3$ is treated similarly by considering the cone $C_{\frac{1}{R}(1,1)}$ with ray generators $(- (R+1)/2,R)$ and $(-(R-1)/2,R)$. The LSLS sequence is again $[1:R-2:1]$.
\end{ex}

Observe that the sum of the elements of the LSLS sequence of a $\frac{1}{R}(1,1)$ singularity is equal to $R$; the Gorenstein index. This is not a property that generalises to arbitrary cyclic quotient singularities. Consider a $\frac{1}{9}(1,5)$ cone with rays generated by $(-1,3)$ and $(2,3)$. The LSLS sequence of the cone is $[1:3:2]$, and the sum of the elements is not equal to $9$.

We use the following corollary from~\cite{GeometryofContinuedFractions} alongside the LSLS sequence of a $\frac{1}{R}(1,1)$ cone to find combinations of R-singularities which cannot be glued together to form a Fano polygon.

\begin{cor}[{\cite[Corollary 11.14]{GeometryofContinuedFractions}}]\label{3.16}
Consider a broken line $A_{0}A_{1} \cdots A_{n}$ with the LSLS sequence $(a_{0},a_{1},\ldots,a_{2n})$. Then the broken line is closed if and only if 
\[ P_{2n+1}(a_{0},a_{1},\ldots,a_{2n}) =0 \qquad \text{ and } \qquad Q_{2n+1}(a_{0},a_{1},\ldots,a_{2n}) =1. \]
\end{cor}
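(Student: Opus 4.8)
The plan is to translate the geometric closedness condition into an algebraic identity on the continuant polynomials $P_k,Q_k$ via the standard matrix model for continued fractions. The bridge is the observation that reading off the LSLS sequence of an $\mathbf{0}$-broken line amounts to recording the successive $GL_2(\Z)$ transition matrices between adapted lattice frames placed along the line, so that "closed" becomes "the total frame transport is trivial''.

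First I would establish the continuant matrix identity: for any finite sequence $b_0,\ldots,b_k$,
\[
\begin{pmatrix} b_0 & 1 \\ 1 & 0 \end{pmatrix}\begin{pmatrix} b_1 & 1 \\ 1 & 0 \end{pmatrix}\cdots\begin{pmatrix} b_k & 1 \\ 1 & 0 \end{pmatrix} = \begin{pmatrix} P_k & P_{k-1} \\ Q_k & Q_{k-1} \end{pmatrix},
\]
where $P,Q$ are exactly the polynomials with recursions $P_k=b_kP_{k-1}+P_{k-2}$, $Q_k=b_kQ_{k-1}+Q_{k-2}$ and the normalization $P(0,\ldots,0)+Q(0,\ldots,0)=1$. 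This is a one-line induction on $k$, the base case being forced by the normalization. The identity converts any statement about such matrix products into a statement about $P_k$ and $Q_k$, and it also yields the determinant relation $P_kQ_{k-1}-P_{k-1}Q_k=(-1)^{k+1}$ by taking determinants.

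Next I would build the geometric dictionary. Using the height-$1$ property of each edge (the preceding lemma), I attach to the broken line a sequence of unimodular frames: across each edge the position vector $A_i$ together with a primitive edge direction is a lattice basis, and at each vertex the turn is recorded by the integer sine. The key computation is that advancing the frame across one edge multiplies by the continuant matrix with entry the signed lattice length $a_{2i}$, while turning through one vertex multiplies by the continuant matrix with entry the signed integer sine $a_{2i+1}$; the $\sgn$ factors in the definition of the LSLS sequence are precisely what force these elementary factors into the uniform shape $\begin{pmatrix} a & 1 \\ 1 & 0\end{pmatrix}$. The total transport around $L$ is therefore the full continuant product, and $L$ is closed if and only if this product is trivial (the initial frame returns to itself). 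By the identity above, triviality of the monodromy is exactly the pair of equations $P_{2n+1}=0$ and $Q_{2n+1}=1$, the index $2n+1$ arising because closing the loop appends the final transition.

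The main obstacle is this geometric dictionary step: verifying that every length and every sine contributes one continuant factor of the correct sign, uniformly and with no exceptional vertices. This demands careful bookkeeping of orientations — the three $\sgn$ symbols in each odd-indexed term — and of the primitivity and height normalizations, and it is here that the hypotheses that $L$ avoids the origin and has all edges at height $1$ are genuinely used. Once the dictionary is established, the equivalence with $P_{2n+1}=0$, $Q_{2n+1}=1$ follows formally from the continuant identity together with the determinant relation, which pins down the remaining entries of the monodromy matrix.
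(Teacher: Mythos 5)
The paper does not actually prove this statement: it is quoted verbatim from Karpenkov (Corollary~11.14 of \emph{Geometry of Continued Fractions}), so there is no internal argument to measure you against. Your overall strategy --- the continuant matrix identity combined with a transport dictionary along the $\mathbf{0}$-broken line --- is indeed the route Karpenkov takes; the ``dictionary'' you describe is precisely his explicit formula for $\mathbf{0}$-broken lines (his Theorem~11.11), from which the closedness criterion is an immediate corollary. So the plan is the right one in outline.

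There are, however, two genuine gaps. The first you flag yourself but do not close: the dictionary step --- that each signed lattice length and each signed integer sine contributes exactly one factor $\left(\begin{smallmatrix} a & 1 \\ 1 & 0 \end{smallmatrix}\right)$, with the three $\sgn$ factors conspiring to give the correct sign --- is the entire mathematical content of the result, and asserting that it ``should work after careful bookkeeping'' leaves the proof empty; this is exactly the induction one must actually carry out. The second gap is an internal inconsistency in your final step. With your own continuant identity the total product is $\left(\begin{smallmatrix} P_k & P_{k-1} \\ Q_k & Q_{k-1} \end{smallmatrix}\right)$, so ``the monodromy is trivial'' would force $P_k=1$ and $Q_k=0$ (plus $P_{k-1}=0$, $Q_{k-1}=1$), which is not the asserted pair $P_k=0$, $Q_k=1$ --- and is three independent conditions rather than two. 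Closedness is only the condition $A_n=A_0$ on the endpoint; the terminal frame (the direction of the last edge) need not coincide with the initial one, so the monodromy need not be the identity. The correct bridge is the endpoint formula: normalising $A_0=(1,0)$, $A_1=(1,a_0)$ via the height-one hypothesis, one shows by induction that $A_n=\bigl(Q(a_0,\ldots),P(a_0,\ldots)\bigr)$, and closedness is the statement that the monodromy returns the initial \emph{position vector} to itself. With that replacement (and attention to the index bookkeeping --- note that $P_{2n+1}$ is a polynomial in $2n+2$ variables but is fed only $2n+1$ arguments in the statement, an infelicity inherited from the paper's transcription), your outline becomes the standard proof.
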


\begin{ex}
Consider the unique Fano polygon $P$ with six $\frac{1}{3}(1,1)$ cones. By glueing the broken line of each cone together along each vertex of $P$, obtain a broken line associated to $P$ and through this an LSLS sequence. It is routine to check that the integer sine for each of the angles at a vertex of $P$ is $-1$, and that the LSLS sequence satisfies
\[ \big[ 1:1:1:-1:1:1:1:-1:1:1:1:-1:\cdots:1:1:1 \big] = \frac{0}{1}. \]
\end{ex}

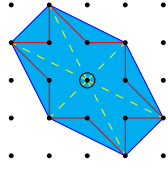
\begin{figure}[H]
\begin{center}
\begin{tikzpicture}[scale=0.5, transform shape]
\begin{scope}
\clip (-2.3,-2.3) rectangle (2.3cm,2.3cm); 
\filldraw[fill=cyan, draw=blue] (-1,2) -- (1,1) -- (2,-1) -- (1,-2) -- (-1,-1) -- (-2,1) -- (-1,2); 
\draw[red] (-1,2) -- (-1,1) -- (-2,1) -- (-1,0) -- (-1,-1) -- (0,-1) -- (1,-2) -- (1,-1) -- (2,-1) -- (1,0) -- (1,1) -- (0,1) -- (-1,2);
\draw[yellow,dashed] (0,0) -- (-1,2);
\draw[yellow,dashed] (0,0) -- (-2,1);
\draw[yellow,dashed] (0,0) -- (-1,-1);
\draw[yellow,dashed] (0,0) -- (1,-2);
\draw[yellow,dashed] (0,0) -- (2,-1);
\draw[yellow,dashed] (0,0) -- (1,1);
\foreach \x in {-7,-6,...,7}{                           
    \foreach \y in {-7,-6,...,7}{                       
    \node[draw,shape = circle,inner sep=1pt,fill] at (\x,\y) {}; 
    }
}
 \node[draw,shape = circle,inner sep=4pt] at (0,0) {}; 
\end{scope}
\end{tikzpicture}
\end{center}
\caption{Broken line associated to the Fano polygon with singularity content $\left( 0, \left\{ 6 \times \frac{1}{3}(1,1) \right\} \right)$.}
\end{figure}

Corollary \ref{3.16} provides a test as to whether it is possible to glue together combinations of R-cones to form a Fano polygon. Namely if there exists a Fano polygon made of cones, cyclically ordered and corresponding to the cyclic quotient singularities $\frac{1}{R_{1}}(1,1), \frac{1}{R_{2}}(1,1), \ldots, \frac{1}{R_{k}}(1,1)$ respectively, then there is a solution to the identity
\[ [1:R_{1}-2:1:m_{1}:1:R_{2}-2:1:m_{2}:\cdots:m_{k-1}:1:R_{k}-2:1]=\frac{0}{1}, \]
where $m_{i} \in \Z$ is the integer sine of the angle of the associated broken line lying between consecutive cones. Furthermore the convexity of the Fano polygon dictates that $m_{i}<0$. Note this variable $m_{i}$ is analogous to the 1-dimensional family of matrices parametrised by $\Z_{\geq 0}$ obtained in Section \ref{Section2}.

The association of a broken line to a polygon is not unique. The choice of starting point for the broken line may change the continued fraction of the broken line since the integer sine of the angle at this point is omitted from the LSLS sequence. However the choice of starting point does not affect that the associated continued fraction should evaluate to $0/1$. Indeed this condition is required to hold at all choices of starting point.

We are now able to prove Theorem~\ref{1.9}.

\begin{proof}[Proof of Theorem~\ref{1.9}]
If such a Fano polygon did exist, then by Corollary~\ref{3.16} there would be a solution $(m_{1},m_{2}) \in \Z^{2}_{<0}$ to the following continued fraction:
\[ [1:R_{1}-2:1;m_{1}:1:R_{2}-2:1:m_{2}:1:R_{3}-2:1] = \frac{0}{1}. \]
By calculating the polynomials $P_{10}(a_{0},\cdots,a_{10})$ and $Q_{10}(a_{0},\cdots,a_{10})$ and substituting appropriately for the $a_{i}$ the condition of the continued fraction translates to the simultaneous equations:
\begin{align*}
P_{10}(1,R_{1}-2,1,m_{1},1,R_{2}-2,&1,m_{2},1,R_{3}-2,1) =\\
&Am_{1}m_{2}+Bm_{1} + Cm_{2} +D =0, \\
Q_{10}(1,R_{1}-2,1,m_{1},1,R_{2}-2,&1,m_{2},1,R_{3}-2,1) =\\
&Em_{1}m_{2}+Fm_{1} + Gm_{2} +H=1,
\end{align*}
where
\begin{align*}
A &= R_{1}R_{2}R_{3}, \\
B &= 2R_{1}R_{2}R_{3} -R_{1}R_{3} - R_{2}R_{3}, \\
C &= 2R_{1}R_{2}R_{3} - R_{1}R_{2} - R_{1}R_{3}, \\
D &= 4R_{1}R_{2}R_{3} -2R_{1}R_{2} -4R_{1}R_{3} -2R_{2}R_{3} +R_{1} +R_{2} +R_{3}-12, \\
E &= R_{1} R_{2} R_{3} -R_{2}R_{3}, \\
F &= 2R_{1} R_{2} R_{3} -R_{1}R_{2} - R_{1}R_{3} -2R_{2}R_{3} + R_{2} + R_{3}, \\
G &= 2R_{1}R_{2}R_{3} - R_{1} R_{3} -3R_{2}R_{3} +R_{3}, \\
H &= 4R_{1}R_{2}R_{3} -2R_{1}R_{2} - 4R_{1}R_{3} - 6R_{2}R_{3} + R_{1} -3R_{2} + 5R_{3} +1.
\end{align*}
Solving the simultaneous equations for $m_{2}$ gives
\[ m_{2} = \frac{-(CF+DE+A-G) \pm \sqrt{(CF+DE+A-G)^{2}-4CE(DF+B-H)}}{2CE}. \]
This expression is not integer for $R_{i} \in \{ 3 \} \cup \Z_{\geq 5}$.
\end{proof}


\section*{Acknowledgements}

I would like to thank Alexander Kasprzyk, my doctoral advisor, for his guidance. Additionally I am grateful to Takayuki Hibi and Akiyoshi Tsuchiya for their organisation of the Summer Workshop on Lattice Polytopes held in Osaka University 2018. Finally I would like to thank Irem Portakal for helpful discussion at the afore mentioned Workshop. This work was partially supported by Kasprzyk's EPRSC Fellowship EP/NO22513/1.

\bibliographystyle{plain}

\begin{thebibliography}{10}

\bibitem{MirrorSymmetryandtheClassificationofOrbifoldDelPezzoSurfaces}
Mohammad Akhtar, Tom Coates, Alessio Corti, Liana Heuberger, Alexander
  Kasprzyk, Alessandro Oneto, Andrea Petracci, Thomas Prince, and Ketil
  Tveiten.
\newblock Mirror symmetry and the classification of orbifold del {P}ezzo
  surfaces.
\newblock {\em Proc. Amer. Math. Soc.}, 144(2):513--527, 2016.

\bibitem{MinkowskiPolynomialsandMutations}
Mohammad Akhtar, Tom Coates, Sergey Galkin, and Alexander~M. Kasprzyk.
\newblock Minkowski polynomials and mutations.
\newblock {\em SIGMA Symmetry Integrability Geom. Methods Appl.}, 8:Paper 094,
  17, 2012.

\bibitem{SingularityContent}
Mohammad Akhtar and Alexander Kasprzyk.
\newblock Singularity content.
\newblock
  \href{https://arxiv.org/abs/1707.09213}{\texttt{https://arxiv.org/abs/1401.5458}},
  2014.

\bibitem{DelPezzoSurfaceswithaSingle1k11singularity}
Daniel Cavey and Thomas Prince.
\newblock Del {P}ezzo surfaces with a single {$1/k(1,1)$} singularity.
\newblock
  \href{https://arxiv.org/abs/1707.09213}{\texttt{https://arxiv.org/abs/1707.09213}},
  July 2018.

\bibitem{MirrorSymmetryandFanoManifolds}
Tom Coates, Alessio Corti, Sergey Galkin, Vasily Golyshev, and Alexander
  Kasprzyk.
\newblock Mirror symmetry and {F}ano manifolds.
\newblock In {\em European {C}ongress of {M}athematics}, pages 285--300. Eur.
  Math. Soc., Z\"urich, 2013.

\bibitem{SL2Z}
Keith Conrad.
\newblock {$SL_{2}(\mathbb{Z})$}.
\newblock
  \href{http://www.math.uconn.edu/~kconrad/blurbs/grouptheory/SL(2,Z).pdf}{\texttt{http://www.math.uconn.edu/$\sim$kconrad/blurbs/....}}

\bibitem{DelPezzoSurfaceswith1/311points}
Alessio Corti and Liana Heuberger.
\newblock Del {P}ezzo surfaces with {$\frac{1}{3}(1,1)$} points.
\newblock {\em Manuscripta Math.}, 153(1-2):71--118, 2017.

\bibitem{GeometryofContinuedFractions}
Oleg Karpenkov.
\newblock {\em Geometry of continued fractions}, volume~26 of {\em Algorithms
  and Computation in Mathematics}.
\newblock Springer, Heidelberg, 2013.

\bibitem{MinimalityandMutationEquivalenceofPolygons}
Alexander Kasprzyk, Benjamin Nill, and Thomas Prince.
\newblock Minimality and mutation-equivalence of polygons.
\newblock {\em Forum Math. Sigma}, 5:e18, 48, 2017.

\bibitem{FanoPolytopes}
Alexander~M. Kasprzyk and Benjamin Nill.
\newblock Fano polytopes.
\newblock In {\em Strings, gauge fields, and the geometry behind}, pages
  349--364. World Sci. Publ., Hackensack, NJ, 2013.

\bibitem{ThreefoldsandDeformationsofSurfaceSingularities}
J.~Koll\'ar and N.~I. Shepherd-Barron.
\newblock Threefolds and deformations of surface singularities.
\newblock {\em Invent. Math.}, 91(2):299--338, 1988.

\bibitem{FlipsFlopsandMinimalModels}
J\'anos Koll\'ar.
\newblock Flips, flops, minimal models, etc.
\newblock In {\em Surveys in differential geometry ({C}ambridge, {MA}, 1990)},
  pages 113--199. Lehigh Univ., Bethlehem, PA, 1991.

\bibitem{LatticePolygonsandNumber12}
Bjorn Poonen and Fernando Rodriguez-Villegas.
\newblock Lattice polygons and the number 12.
\newblock {\em Amer. Math. Monthly}, 107(3):238--250, 2000.

\bibitem{YoungPersonsguidetoCanonicalSingularities}
Miles Reid.
\newblock Young person's guide to canonical singularities.
\newblock In {\em Algebraic geometry, {B}owdoin, 1985 ({B}runswick, {M}aine,
  1985)}, volume~46 of {\em Proc. Sympos. Pure Math.}, pages 345--414. Amer.
  Math. Soc., Providence, RI, 1987.

\end{thebibliography}

School of Mathematical Sciences, University of Nottingham, Nottingham, NG7 2RD, UK. \textit{E-mail address:} danielcavey27@gmail.com

\end{document}